\newtheorem{Th}{Theorem}[section]
\newtheorem{D}[Th]{Definition}
\newtheorem{Ob}[Th]{Remark}
\newtheorem{Cor}[Th]{Corollary}
\newtheorem{Pro}[Th]{Proposition}
\newtheorem{Le}[Th]{Lemma}
\title[Controllability of the Lorentzian rolling on ${\mathbb R}^{n,1}$]{Controllability of the rolling system of a Lorentzian manifold on ${\mathbb R}^{n,1}$}
\author[Abraham Bobadilla Osses and Mauricio Godoy Molina]{Abraham Bobadilla Osses$^{1,2}$ and Mauricio Godoy Molina$^2$}
\address{$^1$ Facultad de Ingenier\'ia, Universidad Aut\'onoma de Chile – Sede Temuco, Av. Alemania 1090 Temuco, Chile.}
\email{abraham.bobadilla@uautonoma.cl}
\address{$^2$ Departamento de Matem\'atica y Estad\'istica, Universidad de La Frontera, Av. Francisco Salazar 01145 Temuco, Chile.}
\email{abraham.bobadilla@ufrontera.cl, mauricio.godoy@ufrontera.cl}
\subjclass[2010]{53A17, 93B05, 53C50}
\keywords{Rolling system, Lorentzian geometry, geometric controllability, Lorentzian group}
\thanks{This research is partially supported by Fondecyt \#1181084}
\begin{document}
\maketitle
\begin{abstract}
In this paper, we study the mechanical system associated with rolling a Lorentzian manifold $(M,g)$ of dimension $n+1\geq2$ on flat Lorentzian space $\widehat{M}={\mathbb R}^{n,1}$, without slipping or twisting. Using previous results, it is known that there exists a distribution $\mathcal{D}_R$ of rank $(n+1)$ defined on the configuration space $Q(M,\widehat{M})$ of the rolling system, encoding the no-slip and no-twist conditions. Our objective is to study the problem of complete controllability of the control system associated with $\mathcal{D}_R$. The key lies in examining the holonomy group of the distribution $\mathcal{D}_R$ and, following the approach of \cite{ChKok}, establishing that the rolling problem is completely controllable if and only if the holonomy group of $(M,g)$ equals $SO_0(n,1)$.
\end{abstract}

\section{Introduction}

Motions of systems with nonholonomic constraints can be found in classical problems in mechanics and are a source of very interesting problems in mathematics, especially in relation to differential geometry and control theory, see for example \cite{ABl,J,Mon,Sol} for modern accounts of these interactions. For several decades nonholonomic constraints have been studied from the point of view of sub-Riemannian geometry, that is when the constraints define a non-integrable subbundle of the tangent bundle of a smooth manifold on which a metric is defined, for instance in \cite{Bro}. In recent years, several authors have studied a generalization of the previous framework, considering that the subbdundle is equipped with a pseudo-Riemannian metric -- {\it i.e.}, nondegenerate but not positive definite. The problems that appear in this context fall within the so-called sub-pseudo-Riemannian geometry, see \cite{ChaMar,GKM,Groch2,Groch1,Groch3,AnnIr,AnnIr2}. Most classical results in pseudo-Riemannian geometry and some applications to physics can be found in \cite{On}.

The mechanical system of a pair of $n-$dimensional Riemannian manifolds rolling on each other without slipping and twisting provides a myriad of examples of nonholonomic systems, since these constraints are not integrable for ``most'' pairs of manifolds, see for example \cite{ChKok,ChKo1,ChKokM,MGrIrFa,Nom}. The rolling problem is known for being technically challenging and for displaying interesting geometric behaviors. The case in which two pseudo-Riemannian manifolds roll on each other has only recently been addressed in \cite{IrFa}, and to better understand the geometry of this mechanical system, one needs certain tools from sub-pseudo-Riemannian geometry. Specific examples of rolling pseudo-Riemannian hyperquadrics can be found in \cite{AnnFa,MSL}.

We will study the problem of rolling, without slipping or twisting, a Lorentzian manifold $(M,g)$ on the flat Lorentzian space $({\mathbb R}^{n,1},\hat{g})$, both of dimension $n+1\geq3$. One of the most emblematic problems related to the rolling problem is giving conditions for the complete controllability of the system, that is, determining whether any given configuration of the manifolds can be rolled to any other, as mentioned before without slipping or twisting. The aim of this article is to provide a relation between the Lorentzian holonomy of $M$ and the controllability of the system, inspired by \cite{ChKok}, but taking care of several technical difficulties that appear due to the non-positivity of the metric. The study of the pseudo-Riemannian holonomy is a matter of current research, due to the fact that the group of isometries of the tangent space is no longer compact, see \cite{Anton,AThomas,Leistner}.


The structure of the paper is as follows. In Section \ref{sec:construction} we present the basic definitions to construct the so-called {\it rolling lift} and the {\it rolling distribution}. This point of view, which differs significantly from the definition given in \cite{IrFa} is much more convenient for our purposes. Afterwards in Section \ref{sec:action} we look carefully at the action of the group of Lorentzian affine rigid transformations $SE_0(n,1)$ on the rolling problem (or, more precisely, the connected component of the identity). This is the most delicate part of the argument. In Section \ref{sec:control}, we apply the previous analysis to the formulation of the controllability of the Lorentzian rolling system in terms of the Lorentzian holonomy of the rolling manifold $M$. Finally, in Section \ref{sec:ex}, we present a few simple examples and interesting consequences.


\section{Construction of the pseudo-Riemannian rolling system}\label{sec:construction}

\subsection{Notation} 

Let $M$ be an $m-$dimensional connected pseudo-Riemannian manifold of index $\nu$ and metric tensor $\langle\cdot,\cdot\rangle_J$. Recall that $\nu$ corresponds to the maximum dimension of a subspace of $T_xM$, $x\in M$, where $\langle\cdot,\cdot\rangle_J$ is negative definite. Given a point $x\in M$ and a tangent vector $v\in T_xM$, it is said that $v$ is
\begin{itemize}
\item spacelike if $\langle v,v\rangle_J>0$ or $v=0$, 
\item timelike if $\langle v,v\rangle_J<0$, and 
\item lightlike if $\langle v,v\rangle_J=0$ and $v\neq0$.
\end{itemize}
If the metric tensor has index $\nu=1$ and $m\geq2$, then $M$ is called a Lorentzian manifold. Later in this article we will require the manifold to be Lorentzian, but most of the concepts in this section hold for arbitrary pseudo-Riemannian manifolds.

Given a smooth distribution $\mathcal{D}$ on $M$, we say that an absolutely continuous (a.c.) curve $\gamma:I=[0,1]\to M$ is $\mathcal{D}-$admissible if $\gamma$ is tangent to $\mathcal{D}$ almost everywhere, that is, if for almost all $t\in I$ it holds that $\dot{\gamma}(t)\in\mathcal{D}|_{\gamma(t)}$. For $x_0\in M$, the endpoints of all the $\mathcal{D}-$admissible curves of $M$ starting at $x_0$ form the set called $\mathcal{D}-$orbit through $x_0$ and denoted $\mathcal{O}_{\mathcal{D}}(x_0)$. More precisely,
\begin{equation}\label{eq:orbit}
\mathcal{O}_{\mathcal{D}}(x_0)=\{\gamma(1)|\,\gamma:I\to M,\text{ $\mathcal{D}-$admissible},\,\gamma(0)=x_0\}.
\end{equation}

The parallel transport of a tangent vector $v\in T_{x}M$ along an a.c. path $\gamma:[a,b]\to M$ from $\gamma(a)=x$ to $\gamma(b)$, associated to the Levi-Civita connection $\nabla$, is denoted as $P_a^b(\gamma)v$. The parallel transport operator
$$P_a^b(\gamma):T_{\gamma(a)}M\to T_{\gamma(b)}M$$
is a linear isometry and it satisfies 
\[
P_b^a(\gamma)=P_a^b(\gamma^{-1})=P_a^b(\gamma)^{-1},
\]
where $\gamma^{-1}$ denotes the curve $\gamma$ parameterized backwards.

For $x\in M$, we use $\Omega_x(M)$ to denote the set of all piecewise a.c. loops $I\to M$ based at $x$. This set becomes a monoid using the usual concatenation of paths. We define the holonomy group of $\nabla$ at $x_0$ as the subgroup $H_{x_0}$ of $GL(T_{x_0}M)$ given by
$$H_{x_0}=\left\{P_0^1(\gamma)|\gamma\in\Omega_{x_0}(M)\right\}.$$
This group has the structure of a Lie group, see \cite{Nom}. It is a standard result that, since $M$ is connected, the holonomy groups at different points are isomorphic.


We use $\operatorname{Iso}(M,g)$ to denote the group of isometries of a pseudo-Riemannian manifold $(M,g)$. The isometries respect parallel translation in the sense that for any a.c. curve $\gamma:I\to M$ and $F\in\operatorname{Iso}(M,g)$ one has
$$F_*|_{\gamma(t)}\circ P_0^t(\gamma)=P_0^t(F\circ\gamma)\circ F_*|_{\gamma(0)},$$
see \cite{Sak}.

Recall that the development of an a.c. curve $\gamma:I\to M$, $\gamma(0)=x$, is a curve $\hat\gamma\colon I\to T_xM$, $\hat\gamma(0)=0$, given by the endpoints of the parallel transport $P_t^0(\gamma)\colon T_{\gamma(t)}M\to T_xM$ applied to the vector field $\dot\gamma$ along $\gamma$, see \cite[Chapter 3, Proposition 4.1]{KoNo}. The following formula follows directly from the definition of development and the Riemannian analogue can be found, for example in \cite[Proposition 2.1]{ChKok}.

\begin{Pro}
The development $\hat\gamma\colon I\to T_xM$ of $\gamma:I\to M$, $\gamma(0)=x$ is given by
$$\hat\gamma(t)=\int_0^tP_s^0(\gamma)\dot{\gamma}(s)\,ds\in T_xM,\quad t\in I.$$
This defines a map $\Lambda_x:\gamma\mapsto\hat\gamma$ from the space of a.c. curves $I\to M$ starting at $x$ to the space of a.c. curves $I\to T_xM$ starting at $0$.
\end{Pro}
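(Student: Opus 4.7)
The plan is to verify the integral formula by showing that the right-hand side satisfies the defining property of the development, and then invoke uniqueness. The reference \cite[Chapter~3, Proposition~4.1]{KoNo} characterizes $\hat\gamma$ as the unique absolutely continuous curve in $T_xM$ with $\hat\gamma(0)=0$ satisfying
$$\dot{\hat\gamma}(t)=P_t^0(\gamma)\dot\gamma(t)\quad\text{for a.e. }t\in I.$$
Given this characterization, the proposed formula is nothing but the fundamental theorem of calculus for absolutely continuous functions.

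Concretely, I would set $\tilde\gamma(t):=\int_0^t P_s^0(\gamma)\dot\gamma(s)\,ds$ and verify two points: (i) the integrand $s\mapsto P_s^0(\gamma)\dot\gamma(s)$ is in $L^1(I,T_xM)$, so that $\tilde\gamma$ is well-defined, absolutely continuous, and satisfies $\tilde\gamma(0)=0$; and (ii) $\dot{\tilde\gamma}(t)=P_t^0(\gamma)\dot\gamma(t)$ for almost every $t$, by the Lebesgue differentiation theorem. Uniqueness of the solution of the defining ODE then forces $\hat\gamma=\tilde\gamma$. The remaining assertion, that $\Lambda_x$ takes values in the space of absolutely continuous curves in $T_xM$ based at $0$, is automatic from the integral representation.

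The only subtle point is the integrability in (i); this is the main technical obstacle. I would address it by working in a local frame of $\gamma^*TM$, in which $P_s^0(\gamma)$ becomes a matrix-valued function solving a linear ODE along $\gamma$ with $L^\infty$ coefficients, since $\gamma$ is only assumed absolutely continuous. By Carath\'eodory's theorem, this matrix-valued function is continuous in $s$, hence bounded on the compact interval $I$. Since $\dot\gamma\in L^1(I)$, the product lies in $L^1$ as required. Note that although in the Lorentzian setting $P_s^0(\gamma)$ is an isometry of an indefinite form and thus not bounded in the ``metric'' sense, the Euclidean bound used here is purely a consequence of the linear ODE and independent of signature, so no genuinely new difficulty arises compared with the Riemannian case treated in \cite{ChKok}.
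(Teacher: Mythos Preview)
Your proposal is correct and is precisely the argument the paper has in mind: the paper does not give a separate proof of this proposition, but simply states that the formula ``follows directly from the definition of development'' (with a reference to \cite[Chapter~3, Proposition~4.1]{KoNo} and \cite[Proposition~2.1]{ChKok}), and your write-up is an accurate unpacking of that sentence. Your care with the $L^1$ integrability of $s\mapsto P_s^0(\gamma)\dot\gamma(s)$ via Carath\'eodory's theorem is more detail than the paper supplies, but it is the right point to check and your treatment is sound.
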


\begin{D}
A pseudo-Riemannian manifold is called:
\begin{enumerate}
\item {\em geodesically complete} if every maximal geodesic is defined on the entire real line,

\item {\em development complete} if the development map $\Lambda_x$ at $x\in M$, is bijective.
\end{enumerate}
\end{D}

For the main results of this paper the hypothesis of development completeness is crucial.

\begin{Ob}
In the Riemannian setting, it is well-known that geodesic completeness is equivalent to development completeness (see \cite[Chapter 4, Proposition 4.1]{KoNo}). Results of this nature for pseudo-Riemannian manifolds are a matter of intense research to this day even for Lorentzian manifolds, for example see \cite{LM}. In fact, unlike in the Riemannian case, there are simple examples of compact Lorentzian manifolds that are not geodesically complete (the classic example is the Clifton-Pohl torus). The injectivity of the development map appears as a hypothesis in certain important results in Lorentzian geometry, see \cite{Kami}.
\end{Ob}

\subsection{State space and distribution}

Given $V$ and $W$ two oriented $m-$dimensional real vector spaces endowed with non-degenerate scalar products of the same index $\nu$. We denote by $G(V,W)$ the group of all orientation preserving linear isometries between $V$ and $W$. For further details, see \cite{IrFa}.

For any pair $M$ and $\widehat{M}$ of connected and oriented $m-$dimensional pseudo-Riemannian manifolds both of index $\nu$, we introduce the space $Q=Q(M,\widehat{M})$ of all relative positions in which $M$ can be tangent to $\widehat{M}$
$$Q=\left\{q\in G(T_{x}M,T_{\widehat{x}}\widehat{M})|x\in M,\widehat{x}\in\widehat{M}\right\}.$$
It is a manifold with the structure of a $G_\nu(m)-$fiber bundle over $M\times\widehat{M}$, where $G_\nu(m)<O_\nu(m)$ is the subgroup of the pseudo-orthogonal group corresponding to the orientation. The dimension of $Q$ is $2m+(m(m-1))/2=m(m+3)/2$. It will be referred to as the state space of the rolling.

Let $x_0\in M$ and $\widehat{x}_0\in\widehat{M}$. Given $R_0\in G\left(T_{x_0}M,T_{\widehat{x}_0}\widehat{M}\right)$ and smooth curves $\gamma\colon I\to M$ and $\widehat{\gamma}\colon I\to\widehat{M}$ such that $\gamma(0)=x_0$ and $\widehat{\gamma}(0)=\widehat{x}_0$, we consider the curve of isometries
$$P_0^t(\widehat{\gamma})\circ R_0\circ P_t^0(\gamma)\colon T_{\gamma(t)}M\to T_{\widehat{\gamma}(t)}\widehat{M},\quad t\in I$$
It can be easily checked that this curve lies in $Q$. Let $\lambda_M$ and $\lambda_{\widehat{M}}$ be the respective orientations of $T_xM$ and $T_{\widehat{x}}\widehat{M}$, then
\begin{align*}
  P_0^t(\gamma)\lambda_M(x_0) &= \lambda_M(x(t)) \\
  P_t^0(\gamma)\lambda_M(x(t)) &= \lambda_M(x_0) \\
  R_0\circ P_t^0(\gamma)\lambda_M(x(t)) &= R_0\lambda_M(x_0)=\lambda_{\widehat{M}}(\widehat{x}_0) \\
  P_0^t(\widehat{\gamma})\circ R_0\circ P_t^0(\gamma)\lambda_M(x(t)) &= P_0^t(\widehat{\gamma})\lambda_{\widehat{M}}(\widehat{x}_0) \\
   &= \lambda_{\widehat{M}}(\widehat{x}(t))
\end{align*}

Following the references \cite{ChKok,ChKo1}, we can define the rolling motion using the so-called rolling lift. 

\begin{D}\label{def:lift}
Let $q=(x,\widehat{x};A)\in Q$ and $X\in T_xM$. The rolling lift of $X$ at $q$ is the vector $\mathscr{L}_R(X)|_q\in T_qQ$ given by
 $$\mathscr{L}_R(X)|_q=\left.\frac{d}{dt}\right|_0(P_0^t(\widehat{\gamma})\circ A\circ P_t^0(\gamma)),$$
where $\gamma\colon I\to M$ and $\widehat{\gamma}\colon I\to\widehat{M}$ are smooth curves that satisfy 
\begin{equation}\label{eq:req}
\gamma(0)=x_0,\quad\widehat{\gamma}(0)=\widehat{x}_0,\quad\dot{\gamma}(0)=X,\quad\mbox{and}\quad\dot{\widehat{\gamma}}(0)=AX.
\end{equation}
\end{D}

From the chain rule it is clear that Definition \ref{def:lift} is independent of the curves $\gamma$ and $\widehat{\gamma}$, as long as they satisfy the requirements \eqref{eq:req}. Let us briefly remark a matter of notation. Even though in general the configuration space $Q$ does not have the structure of a product manifold, it is standard to write its elements as $ q=(x,\widehat{x};A) \in Q$, where $A:T_xM\to T_{\widehat{x}}\widehat{M}$ is an isometry.

This induces a map $\mathscr{L}_R:\mathfrak{X}(M)\to \mathfrak{X}(Q)$ as follows. For a vector field $X\in \mathfrak{X}(M)$ we define the lifted vector field $\mathscr{L}_R(X)\in\mathfrak{X}(Q)$ by
$$
\begin{array}{ccccl}
\mathscr{L}_R(X): & Q & \to & TQ \\
  & q & \mapsto & \mathscr{L}_R(X)|_q
\end{array}
$$

The rolling lift map $\mathscr{L}_R$ allows one to construct a distribution on $Q$ of rank $m$ as follows.

\begin{D}
The rolling distribution $\mathcal{D}_R$ on $Q$ is the $m-$dimensional smooth distribution defined by
$$\mathcal{D}_R|_q=\mathscr{L}_R(T_xM)|_q,\quad q=(x,\widehat{x};A) \in Q.$$
\end{D}

It is a crucial observation that the map $\pi_{Q,M}=\operatorname{pr}_1\circ\pi_Q:Q\to M$ is a fiber bundle projection, namely, if $F=(e_i)_{i=1}^m$ is a local oriented orthonormal frame of $M$, defined on an open set $U$, the local trivialization of $\pi_{Q,M}$ induced for $F$ as
$$\begin{array}{ccccl}
\tau_{F}: & \pi^{-1}_{Q,M}(U) & \to & U\times F_{OON}(\widehat{M}) \\
 & q=(x,\widehat{x};A) & \mapsto & (x,(Ae_i)_{i=1}^m)
\end{array}
$$
is a diffeomorphism, where $F_{OON}(\widehat{M})$ is the bundle of all oriented orthonormal frames on $\widehat{M}$.

The following result can be found in \cite{ChKo1} in the Riemannian setting. For rolling pseudo-Riemannian manifolds it is important to assume that at least one of the manifolds is development complete.

\begin{Pro}
\begin{enumerate}
\item[i)] For any $q_0=(x_0,\hat{x}_0;A_0)\in Q$ and a.c. $\gamma:I\to M$, such that $\gamma(0)=x_0$, there exists a unique a.c. $q:[0,a]\to Q$, $q(t)=(\gamma(t),\hat{\gamma}(t);A(t))$, with $0\leq a\leq1$, which is tangent to $\mathcal{D}_R$ a.e. and $q(0)=q_0$. We denote this unique curve $q$ by
$$t\mapsto q_{\mathcal{D}_R}(\gamma,q_0)(t)=(\gamma(t),\hat{\gamma}_{\mathcal{D}_R}(\gamma,q_0)(t);A_{\mathcal{D}_R}(\gamma,q_0)(t)).$$
and refer to it as the rolling curve along $\gamma$ with initial position $q_0$. In the case that $\widehat{M}$ is development complete one has $a=1$.

Conversely, any a.c. curve $q:I\to Q$, which is a.e. tangent to $\mathcal{D}_R$, is a rolling curve along $\gamma:=\pi_{Q,M}\circ q$, i.e. has the form $q_{\mathcal{D}_R}(\gamma,q(0))$.

\item[ii)] For any $q_0=(x_0,\hat{x}_0;A_0)\in Q$ and a.c. curve $\gamma$ starting from $x_0$, the corresponding rolling curve is given by
\begin{equation}\label{eq:rollcurve}
q_{\mathcal{D}_R}(\gamma,q_0)(t)=\left(\gamma(t),\widehat{\Lambda}_{\hat{x}_0}^{-1}(A_0\circ\Lambda_{x_0}(\gamma))(t);P_0^t(\widehat{\Lambda}_{\hat{x}_0}^{-1}(A_0\circ\Lambda_{x_0}(\gamma)))\circ A_0\circ P_t^0(\gamma)\right),
\end{equation}
where $\Lambda$ and $\widehat\Lambda$ are the development maps on $M$ and $\widehat M$, respectively.
\end{enumerate}
\end{Pro}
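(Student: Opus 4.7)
My plan is to first establish the explicit formula in (ii) by direct verification, and then to deduce both the existence/uniqueness claim of (i) and its converse from a standard Carath\'eodory ODE argument expressed in a local trivialization of the fiber bundle $\pi_{Q,M}$. Consider the candidate curve $q(t)=(\gamma(t),\hat\gamma(t); A(t))$ with
\[
\hat\gamma(t)=\widehat\Lambda_{\hat{x}_0}^{-1}(A_0\circ \Lambda_{x_0}(\gamma))(t), \qquad A(t)=P_0^t(\hat\gamma)\circ A_0 \circ P_t^0(\gamma).
\]
The initial conditions $\hat\gamma(0)=\hat{x}_0$ and $A(0)=A_0$ are immediate from $\Lambda_{x_0}(\gamma)(0)=0$.

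To see that $q(t)$ is tangent to $\mathcal{D}_R$ at almost every $t$, I first verify the no-slip condition by differentiating the identity $\widehat\Lambda_{\hat{x}_0}(\hat\gamma)(t)=A_0\,\Lambda_{x_0}(\gamma)(t)$ using the integral formula for the development map; this yields
\[
P_t^0(\hat\gamma)\,\dot{\hat\gamma}(t)=A_0\,P_t^0(\gamma)\,\dot\gamma(t)
\]
at a.e.\ $t$, equivalently $\dot{\hat\gamma}(t)=A(t)\dot\gamma(t)$. Second, the concatenation property of parallel transport yields
\[
A(t+s)=P_t^{t+s}(\hat\gamma)\circ A(t)\circ P_{t+s}^t(\gamma),
\]
whose derivative at $s=0$ matches the third component of $\mathscr{L}_R(\dot\gamma(t))|_{q(t)}$ via Definition \ref{def:lift}, applied to the curves $u\mapsto\gamma(t+u)$ and $u\mapsto\hat\gamma(t+u)$, which satisfy \eqref{eq:req} thanks to the no-slip identity just obtained. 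Together with the trivial matching of the first two components, this proves that $q$ is a rolling curve.

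For the interval of definition and uniqueness in (i), the construction above defines $q$ on the maximal subinterval $[0,a]\subseteq[0,1]$ on which $\widehat\Lambda_{\hat{x}_0}^{-1}$ acts on the curve $A_0\circ\Lambda_{x_0}(\gamma)$; when $\widehat{M}$ is development complete, $\widehat\Lambda_{\hat{x}_0}$ is bijective and $a=1$. For uniqueness, I would fix an oriented orthonormal frame $F$ on a neighborhood of $\gamma(t)$ and use the local trivialization $\tau_F$ to rewrite the tangency condition $\dot q(t)\in\mathcal{D}_R|_{q(t)}$ as an ODE in the frame coefficients of $A(t)$, whose right-hand side is measurable in $t$ (through $\dot\gamma$) and smooth in the remaining coordinates; Carath\'eodory's existence/uniqueness theorem then applies. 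The converse is now immediate: any a.c.\ curve $q:I\to Q$ tangent to $\mathcal{D}_R$ projects to $\gamma:=\pi_{Q,M}\circ q$, and both $q$ and $q_{\mathcal{D}_R}(\gamma,q(0))$ solve the same Carath\'eodory ODE with the same initial condition, forcing them to coincide.

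The main obstacle will be the careful bookkeeping needed to handle the absolutely continuous regularity at every step: parallel transport along an a.c.\ curve is defined, but the tangency condition and the identities in the verification hold only almost everywhere, so one must ensure that the localization produces a genuine Carath\'eodory ODE rather than a merely formal one. In contrast to the Riemannian case, where compactness of the structure group confines $A(t)$ and automatically prevents blow-up of the fiber coordinates, here the non-compactness of $G_\nu(m)$ is precisely what forces us to invoke development completeness of $\widehat{M}$ to rule out escape of $\hat\gamma(t)$ in the base and thus guarantee $a=1$.
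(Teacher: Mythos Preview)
Your argument is correct and is essentially the standard verification: check that the explicit formula in (ii) satisfies the tangency condition by confirming both the no-slip identity $\dot{\hat\gamma}=A\dot\gamma$ and the parallel-transport concatenation formula for $A(t)$, and then obtain uniqueness from a Carath\'eodory ODE in a local trivialization.

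In the paper itself this Proposition is stated without proof; the authors simply remark beforehand that the result ``can be found in \cite{ChKo1} in the Riemannian setting'' and that in the pseudo-Riemannian case development completeness must be assumed. So there is no proof in the paper to compare against---your write-up supplies exactly the details the authors defer to the cited reference. One small remark on your closing paragraph: the need for completeness to ensure $a=1$ is not really a consequence of the non-compactness of $G_\nu(m)$ (even in the Riemannian case one needs completeness of $\widehat{M}$ for the same reason); the potential blow-up is in the base component $\hat\gamma$, not in the fiber component $A$, since $A(t)=P_0^t(\hat\gamma)\circ A_0\circ P_t^0(\gamma)$ is determined by $\gamma$ and $\hat\gamma$ as long as the latter exists.
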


Before starting with the more technical aspects of the present paper, let us first clarify that using the rolling system one can show that development completeness of $M$ implies its geodesic completeness.

\begin{Pro}
If the development map $\Lambda_x\colon\gamma\mapsto\hat\gamma$ is bijective, then $M$ is geodesically complete.
\end{Pro}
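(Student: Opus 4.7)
The plan is to read off geodesic completeness from the fact that a geodesic $\eta$ with $\eta(0)=x$ and $\dot\eta(0)=v$ has development equal to the straight line $t\mapsto tv$ in $T_xM$. Indeed, $\dot\eta$ is parallel along $\eta$, so $P_s^0(\eta)\dot\eta(s)=v$ for every $s$ and the integral defining $\Lambda_x(\eta)(t)$ collapses to $tv$. I will invert this observation by feeding an arbitrarily long straight line into $\Lambda_x^{-1}$ and extracting a geodesic from the preimage; this is the rolling-theoretic content alluded to in the paragraph preceding the statement, since straight lines in the flat model develop to geodesics on $M$.

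Concretely, fix $v\in T_xM$ and $R>0$, and consider $\hat\eta(t)=tRv$ for $t\in I$. Surjectivity of $\Lambda_x$ provides an a.c.\ curve $\gamma\colon I\to M$ with $\gamma(0)=x$ and $\Lambda_x(\gamma)=\hat\eta$. Differentiating $\hat\eta(t)=\int_0^t P_s^0(\gamma)\dot\gamma(s)\,ds$ yields
\[
\dot\gamma(t)=P_0^t(\gamma)(Rv)\quad\text{for a.e.\ }t.
\]
The right-hand side is a continuous vector field $V(t)$ along $\gamma$ satisfying $\nabla_{\dot\gamma}V=0$ and $V(0)=Rv$; replacing $\dot\gamma$ by this continuous representative identifies $\gamma$ with a $C^1$ solution of the first-order system
\[
\dot\gamma^i=V^i,\qquad \dot V^i+\Gamma^i_{jk}(\gamma)\,\dot\gamma^j V^k=0,
\]
which bootstraps to smoothness by standard ODE theory. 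Since $V=\dot\gamma$ is then a parallel field along $\gamma$, one has $\nabla_{\dot\gamma}\dot\gamma=0$, so $\gamma$ is a geodesic on $[0,1]$ with $\dot\gamma(0)=Rv$. A trivial reparametrization shows that the maximal geodesic $\gamma_v$ through $x$ with initial velocity $v$ is defined at least on $[0,R]$.

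Because $R$ is arbitrary and the same argument applied to $-v$ extends $\gamma_v$ backwards, every maximal geodesic emanating from $x$ is defined on all of $\mathbb{R}$. Reading the definition of development completeness as requiring bijectivity of $\Lambda_y$ at every $y\in M$ (as in the Riemannian situation recalled in the preceding remark), the same argument runs at each base point and yields the geodesic completeness of $M$. The step I expect to be the most delicate is the regularity bootstrapping: the preimage $\gamma$ provided by $\Lambda_x^{-1}$ is a priori only absolutely continuous, and upgrading the a.e.\ identity $\dot\gamma=P_0^t(\gamma)(Rv)$ to a classical geodesic equation is the one place where tools available in the Riemannian case (Hopf--Rinow, metric completeness) are absent, so all regularity has to be squeezed out of the development formula itself and the ODE governing parallel transport.
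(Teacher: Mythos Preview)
Your argument is correct and follows the same route as the paper: pull back a straight line in $T_xM$ through $\Lambda_x^{-1}$ and observe that the resulting curve is a geodesic because its velocity is parallel. The paper phrases this step by invoking the no-twist condition of the rolling map (citing \cite{IrFa}) rather than differentiating the development integral, and it glosses over both the regularity bootstrap and the domain/base-point issues that you handle explicitly; your version is more careful but not a different proof.
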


\begin{proof}
It is well-known that the rolling system defined above satisfies the so-called no twist condition, which states that a vector field along a curve $\gamma\colon I\to M$ is parallel if and only if its image under a rolling is parallel along the image curve. See \cite[Proposition 1]{IrFa}. This translates to the fact that the rolling map and the covariant derivative commute

If $\Lambda_x$ is bijective, then $\Lambda_x^{-1}\ell$ is a well-defined curve on $M$ for an arbitrary straight line $\ell\colon{\mathbb R}\to T_xM$ parameterized with constant speed. Since $\ell$ is a geodesic in $T_xM$, it follows from the no twist condition that $\Lambda_x^{-1}\ell$ is a geodesic in $M$ defined on the entire real line.
\end{proof}



\section{Action of $SE_0(n,1)$ on the rolling problem on ${\mathbb R}^{n,1}$}\label{sec:action}

For the rest of the paper we assume that $(M,g)$ is a $(n+1)-$dimensional Lorentzian manifold, we fix a point $x_0\in M$, and take $\widehat{M}=T_{x_0}M\cong\mathbb{R}^{n,1}$.

As was mentioned at the end of the previous section, the fiber bundle $\pi_{Q,M}:Q\to M$ is key to understanding the rolling system for this specific choice of $\widehat{M}$. It is known that for general Riemannian manifolds $M$ and $\widehat{M}$, the projection map $\pi_Q:Q\to M\times\widehat{M}$ is not a principal bundle, except when $n=2$, see \cite[Proposition 3.4]{ChKokArxiv}. That this fiber bundle happens to have an appropriate $SO(2)-$action for surfaces, has been extensively used, for example in \cite{AS,BH,MaBi}.

As was mentioned after Definition \ref{def:lift}, the elements in $Q$ are written as triplets even though in general $Q$ is not a product manifold. Under the assumption that $\widehat{M}=\mathbb{R}^{n,1}$, that abuse of notation becomes even more useful, since the following diagram
\[\begin{tikzcd}
	Q && M\times SE_0(n,1) \\
	\\
	&& M
	\arrow[from=1-1, to=3-3, "\pi_{Q,M}",swap]
	\arrow[from=1-1, to=1-3, "f"]
	\arrow[from=1-3, to=3-3, "\operatorname{pr}_1"]
\end{tikzcd}\]
commutes, where $f(x,\hat{x};A)=(x,(\hat{x},A))$ and $\operatorname{pr}_1$ is the projection onto the first component. In this argument we consider $SE_0(n,1)$ as the semidirect product ${\mathbb{R}}^{n,1}\rtimes SO_0(n,1)$.

Moreover, due to the choice of $\widehat{M}$, it can be seen that $\pi_{Q,M}$ is a principal bundle with a left $SE_0(n,1)-$action $\mu$, given by
\begin{equation}\label{eq:action}
\mu(B,q)=(x,C\hat{x}+\hat{y};CA).
\end{equation}
where $q=(x,\hat{x};A)\in Q$ and $B=(\hat{y},C)\in SE_0(n,1)$. To see that this is indeed the case, let us first show that $\mu(B,q)\in Q$ for all $q\in Q$ and $B\in SE_0(n,1)$. Taking a tangent vector $X\in T_xM$ and considering $\mu(B,q)$ as a linear map from $T_xM\to T_{\hat{z}}\mathbb{R}^{n,1}$, where $\hat{z}=C\hat{x}+\hat{y}$, we have
\[
\|\mu(B,q)X\|_J^2=\langle\mu(B,q)X,\mu(B,q)X\rangle_J=\langle CAX,CAX\rangle_J=\langle AX,AX\rangle_J=\langle X,X\rangle_g.
\]
This shows that $CA$ is an isometry. Additionally, since $SO_0(n,1)$ is the identity component of $SO(n,1)$, it preserves orientation. Thus, $\mu(B,q)$ is indeed an element of $Q$.

Considering that $\mu:SE_0(n,1)\times Q\to Q$ is well defined and smooth, the next step is to study the fibers of $\pi_{Q,M}$. For each $x\in M$ and each $q\in\pi^{-1}_{Q,M}(x)$, with $q=(x,\hat{x};A)$ the map
\begin{equation}\label{eq:action}
\begin{array}{rcl}
\mu_q : SE_0(n,1) &\longrightarrow & \pi^{-1}_{Q,M}(x) \\
(\hat{y},C)&\longmapsto& (x,C\hat{x}+\hat{y};CA)
\end{array}
\end{equation}
is a diffeomorphism. It is clearly smooth, and its inverse $\mu^{-1}_q(x,\hat{z};D)=(\hat{z}-DA^{-1}\hat{x},DA^{-1})$ is also smooth. This proves that $\mu_q$ is indeed an $SE_0(n,1)-$bundle.

It remains to show that $\mu$ is free and that is transitive on each fiber. If 
\begin{equation}\label{eq:free}
\mu(B,q)=(x,C\hat{x}+\hat{y};CA)=q=(x,\hat{x};A),
\end{equation}
then focusing on the third component of equality \eqref{eq:free}, we have
\[
CA=A\implies C=\operatorname{id}.
\]
Similarly, looking at the second component of equality \eqref{eq:free}, it follows that
\[
C\hat{x}+\hat{y}=\hat{x}\implies\hat{x}+\hat{y}=\hat{x}\implies\hat{y}=0.
\]
Therefore $B=(\hat{y},C)=(0,\operatorname{id})$, and thus the action $\mu$ is free. 

For the fiber transitivity of $\mu$, let $q=(x,\hat{x},A)$ and $\overline{q}=(x,\overline{\hat{x}};\overline{A})$ belong to the same fiber of $\pi_{Q,M}$. The goal is to find $B=(\hat{y},C)\in SE_0(n,1)$ such that $\mu(B,q)=\overline{q}$, or more explicitly, we want to solve the equation
\[
(x,C\hat{x}+\hat{y};CA)=(x,\overline{\hat{x}};\overline{A}).
\]
This is clearly equivalent to the system
\begin{align}
C\hat{x}+\hat{y}&=\overline{\hat{x}}\label{eq:trans}\\
CA &= \overline{A}\label{eq:rot}
\end{align}
We can solve equation \eqref{eq:rot} for $C$ to find $C = \overline{A}A^{-1}$. Substituting this into equation \eqref{eq:trans}, we get $\overline{A}A^{-1}\hat{x}+\hat{y} = \overline{\hat{x}}$. Rearranging terms, we have $\hat{y} = \overline{\hat{x}} - \overline{A}A^{-1}\hat{x}$. This gives us a solution $B=(\overline{\hat{x}} - \overline{A}A^{-1}\hat{x},\overline{A}A^{-1})\in SE_0(n,1)$, and thus, the action is transitive over each fiber.

The following result strengthens the previous argument, showing that the action $\mu$ preserves the rolling distribution and its relation to the orbits of the rolling problem.

\begin{Le}\label{lem:distr}
\begin{enumerate}
\item[(i)] The action \eqref{eq:action} preserves the distribution $\mathcal{D}_R$, that is, for any $q\in Q$ y $B\in SE_0(n,1)$, 
\begin{equation*}
(\mu_B)_*\mathcal{D}_R|_q=\mathcal{D}_R|_{\mu(B,q)},
\end{equation*}
where $\mu_B:Q\to Q$ is defined by $\mu_B(q)=\mu(B,q)$.
\item[(ii)] For each $q=(x,\hat{x};A)\in Q$ there exists a unique subgroup $H_q<SE_0(n,1)$ called the holonomy group of $\mathcal{D}_R$, such that
$$\mu(H_q\times{q})=\mathcal{O}_{\mathcal{D}_R}(q)\cap\pi^{-1}_{Q,M}(x),$$
where $\mathcal{O}_{\mathcal{D}_R}(q)$ is the orbit of the rolling distribution passing through $q\in Q$, as in \eqref{eq:orbit}. Moreover, if $q'=(x',\hat{x}';A')\in Q$ is in the same fiber as $q$, then $H_q$ y $H_{q'}$ are conjugate in $SE_0(n,1)$ and every conjugation class of $H_q$ in $SE_0(n,1)$ is of the form $H_{q'}$. This conjugation class will be denoted by $H$. Furthermore $\pi_{\mathcal{O}_{\mathcal{D}_R},M}:\mathcal{O}_{\mathcal{D}_R}(q)\to M$ is an $H$ bundle over $M$.

\end{enumerate}
\end{Le}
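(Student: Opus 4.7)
The plan is to prove (i) first and then bootstrap to (ii) using the freeness and transitivity of $\mu$ on the fibers of $\pi_{Q,M}$ just established. The workhorse throughout is the compatibility of isometries with parallel transport recalled in the notation subsection.

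For (i), I would argue pointwise on the rolling lift $\mathscr{L}_R(X)|_q$ for arbitrary $X\in T_xM$, since $\mathcal{D}_R|_q=\mathscr{L}_R(T_xM)|_q$. Pick curves $\gamma$ in $M$ and $\widehat{\gamma}$ in $\widehat{M}$ satisfying \eqref{eq:req}; the rolling lift is the velocity at $t=0$ of $t\mapsto(\gamma(t),\widehat{\gamma}(t);P_0^t(\widehat{\gamma})\circ A\circ P_t^0(\gamma))$. Writing $B=(\hat{y},C)$ and letting $F_B(\hat{z})=C\hat{z}+\hat{y}$ denote the corresponding affine isometry of $\widehat{M}$, whose differential is constantly $C$, the map $\mu_B$ sends this curve to $t\mapsto(\gamma(t),F_B(\widehat{\gamma}(t));C\circ P_0^t(\widehat{\gamma})\circ A\circ P_t^0(\gamma))$. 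Applying the intertwining identity $C\circ P_0^t(\widehat{\gamma})=P_0^t(F_B\circ\widehat{\gamma})\circ C$ and setting $\widetilde{\widehat{\gamma}}:=F_B\circ\widehat{\gamma}$, the transformed curve becomes $t\mapsto(\gamma(t),\widetilde{\widehat{\gamma}}(t);P_0^t(\widetilde{\widehat{\gamma}})\circ(CA)\circ P_t^0(\gamma))$, which is precisely the curve defining $\mathscr{L}_R(X)|_{\mu_B(q)}$, since the initial conditions $\widetilde{\widehat{\gamma}}(0)=C\hat{x}+\hat{y}$ and $\dot{\widetilde{\widehat{\gamma}}}(0)=C(AX)=(CA)X$ required by \eqref{eq:req} at $\mu_B(q)$ hold automatically. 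Differentiating at $t=0$ and taking spans over $X\in T_xM$ yields (i).

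For (ii), the fiberwise bijectivity of $B\mapsto\mu(B,q)$ lets me define $H_q:=\{B\in SE_0(n,1):\mu(B,q)\in\mathcal{O}_{\mathcal{D}_R}(q)\}$, for which the asserted equality $\mu(H_q\times\{q\})=\mathcal{O}_{\mathcal{D}_R}(q)\cap\pi_{Q,M}^{-1}(x)$ is tautological. That $H_q$ is a subgroup uses (i) crucially: for $B_1,B_2\in H_q$ with $\mathcal{D}_R$-admissible paths $\eta_i$ from $q$ to $\mu(B_i,q)$, the image $\mu_{B_1}\circ\eta_2$ is $\mathcal{D}_R$-admissible by (i) and runs from $\mu(B_1,q)$ to $\mu(B_1B_2,q)$, so its concatenation with $\eta_1$ shows $B_1B_2\in H_q$; inverses are handled by time-reversing $\eta_1$ and then applying $\mu_{B_1^{-1}}$. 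The same transport argument shows that if $q'=\mu(B_0,q)$ lies in the fiber over $x$ one has $H_{q'}=B_0 H_q B_0^{-1}$, and every conjugate is realized as $q'$ ranges over the fiber.

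Finally, to obtain the $H$-bundle structure of $\pi_{\mathcal{O}_{\mathcal{D}_R},M}\colon\mathcal{O}_{\mathcal{D}_R}(q)\to M$, I would construct local sections: around each $x'\in M$ choose a smoothly parameterized family of a.c. curves $\gamma_y$ from $x'$ to $y\in U$ (e.g.\ via a coordinate chart), fix $\sigma_0\in\mathcal{O}_{\mathcal{D}_R}(q)\cap\pi_{Q,M}^{-1}(x')$, and let $\sigma(y)$ be the endpoint of the rolling curve starting at $\sigma_0$ over $\gamma_y$ given by \eqref{eq:rollcurve}; the map $(y,h)\mapsto\mu(h,\sigma(y))$ is then the desired local trivialization $U\times H\to\pi_{\mathcal{O}_{\mathcal{D}_R},M}^{-1}(U)$, development completeness of $\widehat{M}=\mathbb{R}^{n,1}$ ensuring that all rolling lifts are defined on the entire parameter interval. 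The main obstacle is the intertwining identity in part (i), where the hypothesis $\widehat{M}=\mathbb{R}^{n,1}$ really enters through the availability of the ambient affine isometries $F_B$; once this is in hand, all of (ii) reduces to formal principal-bundle bookkeeping driven by (i) together with fiberwise freeness and transitivity.
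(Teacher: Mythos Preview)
Your proof is correct. For part~(i), the paper takes a slightly less direct route: it first establishes the global commutation $\mu_B(q_{\mathcal{D}_R}(\gamma,q_0)(t))=q_{\mathcal{D}_R}(\gamma,\mu_B(q_0))(t)$ for all $t$ by manipulating the rolling-curve formula~\eqref{eq:rollcurve} and invoking uniqueness of ODE solutions, and only then differentiates at $t=0$ to obtain $(\mu_B)_*\mathscr{L}_R(X)|_{q_0}=\mathscr{L}_R(X)|_{\mu_B(q_0)}$. Your argument bypasses this and works directly with Definition~\ref{def:lift}, applying the same intertwining identity $C\circ P_0^t(\widehat{\gamma})=P_0^t(F_B\circ\widehat{\gamma})\circ C$ pointwise to the defining curve of $\mathscr{L}_R(X)|_q$; this is cleaner and avoids the ODE-uniqueness step, though the paper's intermediate statement (that $\mu_B$ sends rolling curves to rolling curves) is a useful fact in its own right. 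For part~(ii), the paper simply cites the general holonomy theory of principal connections in Kobayashi--Nomizu, whereas you spell out the standard argument explicitly; your construction of local trivializations via rolling along a smooth family of curves is precisely how one unwinds that reference in the present setting.
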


\begin{proof}
\begin{enumerate}
\item[(i)] To prove that the action $\mu$ preserves the distribution, let us consider $B=(\hat{y},C)\in SE_0(n,1)$. Let $\gamma\colon I\to M$ be an a.c. curve, an initial configuration $q_0\in Q$, and $\hat{\gamma}_{\mathcal{D}_R}(\gamma,q_0)$ be the corresponding development. First, we will show that the following commuting relation
\[
\mu_B(q_{\mathcal{D}_R}(\gamma,q_0))=q_{\mathcal{D}_R}(\gamma,\mu_B(q_0))
\]
is valid.

From the third coordinate in equation \eqref{eq:rollcurve}
$$P_0^t(\hat{\gamma}_{\mathcal{D}_R}(\gamma,q_0))AP_t^0(\gamma)=A_{\mathcal{D}_R}(\gamma,q_0)(t),$$
we have
$$AP_t^0(\gamma)\dot{\gamma}(t)=P_t^0(\hat{\gamma}_{\mathcal{D}_R}(\gamma,q_0))\dot{\hat{\gamma}}_{\mathcal{D}_R}(\gamma,q_0)(t).$$
Letting $B\in SE_0(n,1)$ act on the previous equality, we see that
$$P_t^0(\hat{\gamma}_{\mathcal{D}_R}(\gamma,\mu_B(q_0)))\dot{\hat{\gamma}}_{\mathcal{D}_R}(\gamma,\mu_B(q_0))=CAP_t^0(\gamma)\dot{\gamma}(t).$$
As a consequence, we have the equalities
\begin{align*}
P_t^0(C\hat{\gamma}_{\mathcal{D}_R}(\gamma,q_0))\frac{d}{dt}(C\hat{\gamma}_{\mathcal{D}_R}(\gamma,q_0)(t))&=CP_t^0(\hat{\gamma}_{\mathcal{D}_R}(\gamma,q_0))\dot{\hat{\gamma}}_{\mathcal{D}_R}(\gamma,q_0)(t)\\
&= CAP_t^0(\gamma)\dot{\gamma}(t)\\
&= P_t^0(\hat{\gamma}_{\mathcal{D}_R}(\gamma,\mu_B(q_0)))\dot{\hat{\gamma}}_{\mathcal{D}_R}(\gamma,\mu_B(q_0))(t)
\end{align*}
due to the uniqueness of solutions of ordinary differential equations, we conclude that $C\hat{\gamma}_{\mathcal{D}_R}(\gamma,q_0)=\hat{\gamma}_{\mathcal{D}_R}(\gamma,\mu_B(q_0))$. Hence,
\begin{align*}
CA_{\mathcal{D}_R}(\gamma,q_0)&=C(P_0^t(\hat{\gamma}_{\mathcal{D}_R}(\gamma,q_0))AP_t^0(\gamma))=P_0^t(C\hat{\gamma}_{\mathcal{D}_R}(\gamma,q_0))CAP_t^0(\gamma)\\
&=P_0^t(\hat{\gamma}_{\mathcal{D}_R}(\gamma,\mu_B(q_0)))CAP_t^0(\gamma)=A_{\mathcal{D}_R}(\gamma,\mu_B(q_0))
\end{align*}
This shows that $\mu_B(q_{\mathcal{D}_R}(\gamma,q_0)(t))=q_{\mathcal{D}_R}(\gamma,\mu_B(q_0))(t)$.

Differentiating with respect to $t$ and evaluating at $t=0$, we have
\begin{align*}
(\mu_B)_*\mathscr{L}_R(\dot{\gamma}(0))|_{q_0}&= (\mu_B)_*\left.\frac{d}{dt}\right|_{t=0}q_{\mathcal{D}_R}(\gamma,q_0)(t)\\
&= \left.\frac{d}{dt}\right|_{t=0}\mu(B,q_{\mathcal{D}_R}(\gamma,q_0))\\
&= \left.\frac{d}{dt}\right|_{t=0}q_{\mathcal{D}_R}(\gamma,\mu(B,q_0))(t)=\left.\mathscr{L}_R(\dot{\gamma}(0))\right|_{\mu(B,q_0)}
\end{align*}
This concludes the proof of (i).

\item[(ii)] It is a general fact for principal bundles, see \cite{KoNo}.\qedhere
\end{enumerate}
\end{proof}

Additionally, we need the following technical fact regarding the action of sufficiently large subgroups of $SE_0(n,1)$. An analogous result in the Riemannian context can be found in \cite{ChKok}, but the proof is quite different. The main reason is that we need to be very careful with the causal character of the vectors involved in the proof.

\begin{Th}\label{th:action}
Let $G$ be a Lie subgroup of $SE_0(n,1)=\mathbb{R}^{n,1}\ltimes SO_0(n,1)$ such that $\operatorname{pr}_2(G)=SO_0(n,1)$. Then either $G=SE_0(n,1)$ or $G\cong SO_0(n,1)$.

\end{Th}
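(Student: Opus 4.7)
The plan is to analyze the kernel of the restricted projection $\operatorname{pr}_2|_G \colon G \to SO_0(n,1)$. Let $N = G \cap \bigl( \mathbb{R}^{n,1} \times \{I\} \bigr)$ be the subgroup of pure translations in $G$, viewed inside the abelian factor $\mathbb{R}^{n,1}$. By hypothesis there is a short exact sequence $1 \to N \to G \to SO_0(n,1) \to 1$. A direct computation in the semidirect product gives $(v,C)(w,I)(v,C)^{-1} = (Cw, I)$, so the conjugation action of $G$ on $N$ factors through the standard action of $SO_0(n,1)$ on $\mathbb{R}^{n,1}$. Hence $N$ is an $SO_0(n,1)$-invariant closed Lie subgroup of $\mathbb{R}^{n,1}$.

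The heart of the proof will be to show that the only $SO_0(n,1)$-invariant closed Lie subgroups of $\mathbb{R}^{n,1}$ are $\{0\}$ and $\mathbb{R}^{n,1}$. The identity component $N_0$ is a linear subspace invariant under $SO_0(n,1)$, and I plan to establish irreducibility of the defining representation of $SO_0(n,1)$ on $\mathbb{R}^{n,1}$ by a case analysis on the causal type of a chosen nonzero $v \in N_0$. If $v$ is timelike, a one-parameter family of boosts $B_t$ in a plane containing $v$ produces $B_t v - v \in N_0$, which is spacelike for $t \neq 0$; combining with spacelike rotations then recovers the whole space. The spacelike case is symmetric, using a boost to produce a timelike vector from the spacelike one. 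If $v$ is null, I would normalize $v = e_0 + e_1$, apply a rotation in the spacelike hyperplane $e_0^\perp$ to obtain a second null vector $e_0 + e_2 \in N_0$, and observe that the sum $2e_0 + e_1 + e_2$ is timelike and in $N_0$, reducing to the first case. Here $n \geq 2$ is essential, as one needs at least two independent spacelike directions. Once the identity component is classified, the case $N_0 = \{0\}$ is finished by noting that any $v \in N$ must be fixed componentwise by the connected group $SO_0(n,1)$, and no nonzero vector of $\mathbb{R}^{n,1}$ is $SO_0(n,1)$-fixed; hence $N = \{0\}$ or $N = \mathbb{R}^{n,1}$.

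To conclude, I split on $N$. If $N = \{0\}$, then $\operatorname{pr}_2|_G$ is a bijective smooth homomorphism onto $SO_0(n,1)$, hence a Lie group isomorphism, giving $G \cong SO_0(n,1)$. If $N = \mathbb{R}^{n,1}$, then for any $(u, C) \in SE_0(n,1)$ I pick some $(v, C) \in G$ via surjectivity of $\operatorname{pr}_2|_G$ and factor
\[
(u, C) = (u - v, I) \cdot (v, C),
\]
where $(u - v, I) \in N \subset G$ and $(v, C) \in G$; hence $G = SE_0(n,1)$.

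The main obstacle is the irreducibility step. In the Riemannian analogue of \cite{ChKok} the ambient group $SO(n+1)$ is compact and acts transitively on spheres, so one can span the space from a single orbit with almost no work. Here the orbits of $SO_0(n,1)$ on $\mathbb{R}^{n,1}$ are hyperboloids and cones of three different causal types with noncompact stabilizers, so the argument must track each causal type explicitly and verify that boosts and spacelike rotations jointly generate enough of the representation. The null case in particular uses $n \geq 2$ in a genuine way, matching the dimension hypothesis $n+1 \geq 3$ fixed at the start of Section \ref{sec:action}.
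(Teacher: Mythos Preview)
Your argument is correct and runs parallel to the paper's: both isolate the normal subgroup $N$ of pure translations in $G$, observe that conjugation makes $N$ an $SO_0(n,1)$-invariant subgroup of $\mathbb{R}^{n,1}$, and then argue via a causal case analysis (spacelike, timelike, null) that $N$ is either $\{0\}$ or all of $\mathbb{R}^{n,1}$, concluding in each case exactly as you do. The execution of the key step differs in one respect worth noting. The paper treats $N$ purely as an additive subgroup: it uses transitivity of $SO_0(n,1)$ on each pseudo-sphere $S_r(0)$, hyperboloid $H_\rho^\pm$, and half-cone $L^\pm$ to absorb the full orbit of any nonzero $v$, and then writes down explicit pairs of vectors on these orbits whose sums cross between causal types (together with a scaling trick $\varphi_u=(\varphi_{u/k})^k$ for large spacelike $u$); this handles the discrete and continuous parts of $N$ in one stroke, without invoking the Lie structure of $N$. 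You instead pass to the identity component $N_0$, use that a connected Lie subgroup of $\mathbb{R}^{n,1}$ is a linear subspace, and prove irreducibility of the standard representation by producing boost-differences $B_tv-v$ of a different causal type; the price is a separate argument when $N_0=\{0\}$, which you dispatch correctly (a $0$-dimensional Lie group is countable, so the connected orbit $SO_0(n,1)\cdot v$ must be a point, forcing an impossible nonzero fixed vector). Both routes genuinely require $n\geq 2$ in the null case, as you observe. One cosmetic point: you call $N$ ``closed'' in $\mathbb{R}^{n,1}$, but this is neither justified (if $G$ is only immersed) nor used anywhere in your argument.
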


\begin{proof}
Note that since $\operatorname{pr}_2(G)=SO_0(n,1)$, then for each $A\in SO_0(n,1)$ there exists at least one vector $w_A\in{\mathbb R}^{n,1}$ such that 
\[
\psi_A=(w_A,A)\in G. 
\]

This proof has two main parts, each corresponding to the two cases described for $G$: if $G$ contains a non-trivial translation, then we will show that $G$ is the entire group $SE_0(n,1)$, while on the other hand, if the only translation in $G$ is the identity, then we will see that $G$ is isomorphic to $SO_0(n,1)$.

For the first part, let us suppose there exists a vector $v\in{\mathbb R}^{n,1}$, $v\neq0$, such that $\varphi_v:=(v,\operatorname{id})\in G$. Also, given an arbitrary element $\psi=(w,A)\in G$, we have 
\begin{equation}\label{eq:conj}
\psi^{-1}\circ\varphi_v\circ\psi=\varphi_{A^{-1}v}\in G.
\end{equation}
This conjugation results in another translation in $G$.

Due to technical reasons that will become evident during the proof of the result, the arguments have to take into account the causality of the vector $v$. In each case we will prove that the hypothesis of the existence of one such vector $v\in{\mathbb R}^{n,1}$, $v\neq0$, implies that $\varphi_u\in G$, for all $u\in{\mathbb R}^{n,1}$. Equivalently, if $G$ contains one translation, then it contains all translations. Note that $G$ containing all translations means that $G=SE_0(n,1)$, since 
\[
\varphi_u\circ\psi_A=(u+w_A,A)\in G
\]
for all $u\in{\mathbb R}^{n,1}$, which implies that $(u,A)\in G$ for all $u\in{\mathbb R}^{n,1}$ and all $A\in SO_0(n,1)$.

\begin{description}
\item[Spacelike case $\langle v,v\rangle_J=r^2>0$] Since $SO_0(n,1)$ acts transitively on the Lorentzian sphere
$$S_r(0):=\{u\in\mathbb{R}^{n,1}|\,\langle u,u\rangle_J=r^2\},$$
see \cite{Figue}, then for any $u\in S_r(0)$ there exists an element $A\in SO_0(n,1)$ such that $A^{-1}v=u$. From \eqref{eq:conj}, we conclude that $\psi_A^{-1}\circ\varphi_v\circ\psi_A=\varphi_u\in G$ for any $u\in S_r(0)$.

If $u\in\mathbb{R}^{n,1}$ with $0<\langle u,u\rangle_J<r^2$, then one can decompose 
\[
u=u'+u''\quad\mbox{with}\quad u',u''\in S_r(0). 
\]
The reason is that one can take $u'\in S_r(0)\cap S_r(u)\neq\varnothing$ and then set $u''=u-u'\in S_r(0)$. From the previous argument, we know that $\varphi_{u'},\varphi_{u''}\in G$, and thus
\[
\varphi_{u}=\varphi_{u'}\circ\varphi_{u''}\in G.
\]
We conclude that $\varphi_u\in G$ for any $\langle u,u\rangle_J<r^2$.

Furthermore, if $\langle u,u\rangle_J\,>r^2$, there exists $k\in\mathbb{N}$ such that $\frac1{k^2}\langle u,u\rangle_J=\langle \frac1ku,\frac1ku\rangle_J<r^2$. It follows that $\varphi_{\frac{1}{k}u}\in G$, which implies 
\[
\underbrace{\varphi_{\frac{1}{k}u}\circ\cdots\circ\varphi_{\frac{1}{k}u}}_{k\mbox{ times}}=\varphi_u\in G. 
\]
Thus, we conclude that $\varphi_u\in G$ for every space-like vector $u\in{\mathbb R}^{n,1}$.

To continue with the argument, we need to show that if $u\in{\mathbb R}^{n,1}$ is a timelike vector, then the translations $\varphi_u$ also belong to $G$. Since $SO_0(n,1)$ acts transitively on each of the Lorentzian hyperbolic spaces
\begin{align*}
H_\rho^+&=\{u\in{\mathbb R}^{n,1}|\,\langle u,u\rangle_J=-\rho^2,u_{n+1}>0\},\\
H_\rho^-&=\{u\in{\mathbb R}^{n,1}|\,\langle u,u\rangle_J=-\rho^2,u_{n+1}<0\},
\end{align*}
for any $\rho>0$, see \cite{Figue}, using similar arguments to the spacelike case, it is enough to show that $\varphi_u\in G$, for two specific timelike vectors $u\in{\mathbb R}^{n,1}$, one of them in $H_\rho^+$ and another one in $H_\rho^-$. To do this, consider the spacelike vectors
$$u_+=\left(\begin{array}{c}
\frac{\sqrt{5}}{2}\rho\\
0 \\
\vdots\\
0\\
\frac{1}{2}\rho
\end{array}
\right), u_-=\left(\begin{array}{c}
-\frac{\sqrt{5}}{2}\rho\\
0 \\
\vdots \\
0\\
\frac{1}{2}\rho
\end{array}
\right)\in S_\rho(0).
$$
It is easy to see that
\[
u_++u_-\in H_\rho^+\quad\mbox{and}\quad -u_+-u_-\in H_\rho^-
\]
and therefore 
\[
\varphi_{u_++u_-}=\varphi_{u_+}\circ\varphi_{u_-}\in G\quad\mbox{and}\quad\varphi_{u_++u_-}^{-1}=\varphi_{-u_+-u_-}\in G.
\]
We conclude that for every $u\in H_\rho^+\cup H_\rho^-$, $\varphi_u\in G$. As the choice of $\rho$ is arbitrary, we conclude that $\varphi_u\in G$ for any time-like $u$.

The final step of this case is to consider lightlike vectors. As before, since $SO_0(n,1)$ acts transitively on each half of the light cone
\begin{align*}
L^+&=\{u\in{\mathbb R}^{n,1}|\,\langle u,u\rangle_J=0,u_{n+1}>0\},\\
L^-&=\{u\in{\mathbb R}^{n,1}|\,\langle u,u\rangle_J=0,u_{n+1}<0\},
\end{align*}
see \cite{Figue}, it is enough to show that $\varphi_u\in G$, for two specific lightlike vectors $u\in{\mathbb R}^{n,1}$, one of them in $L^+$ and another one in $L^-$. To do this, consider the vectors
$$u_1=\left(\begin{array}{c}
1\\
0 \\
\vdots\\0\\
0
\end{array}
\right)\in S_1(0)\quad\text{and}\quad u_2=\left(\begin{array}{c}
0 \\0\\
\vdots \\
0 \\
1
\end{array}
\right)\in H_1^+.$$
Since $u_1+u_2\in L^+$ and $u_1-u_2\in L^-$, we see that
\[
\varphi_{u_1+u_2}=\varphi_{u_1}\circ\varphi_{u_2}\in G\quad\mbox{and}\quad\varphi_{u_1-u_2}=\varphi_{u_1}\circ\varphi_{-u_2}\in G.
\]
It follows that $\varphi_u\in G$, for every $u\in L^+\cup L^-$. 

We have shown that if $v\in{\mathbb R}^{n,1}$ is spacelike and $\varphi_v\in G$, then $G$ contains all translations.

\item[Timelike case $\langle v,v\rangle=-r^2<0$] In this case, $v\in H^+_r$ or $v\in H^-_r$. Without loss of generality, let us suppose $v\in H^+_r$, since the other case is treated similarly. Since $SO_0(n,1)$ acts transitively on $H^+_r$, then the translation $\varphi_w\in G$, for all $w\in H^+_r$. As before, that means also that $\varphi_w\in G$, for all $w\in H^-_r$, since $\varphi_w^{-1}=\varphi_{-w}$. Specifically, considering the vectors
\[
w_1=\left(\begin{array}{c}
0 \\
\vdots\\
0\\
\frac{1}{2}r \\
\frac{\sqrt{5}}{2}r
\end{array}
\right)\in H^+_r\quad\text{and}\quad w_2=\left(\begin{array}{c}
0 \\
\vdots \\
0 \\
\frac{1}{2}r \\
-\frac{\sqrt{5}}{2}r
\end{array}
\right)\in H^-_r,
\]
we see that $\varphi_{w_1+w_2}=\varphi_{w_1}\circ\varphi_{w_2}\in G$, where
\[
w_1+w_2=\left(\begin{array}{c}
0 \\
\vdots \\
0 \\
r \\
0
\end{array}
\right)\in S_r(0).
\]
By the first case, since $w_1+w_2\neq0$ is spacelike, we also conclude that $G$ contains all translations.

\item[Lightlike case $\langle v,v\rangle_J=0$] As before, this means that $v\in L^+$ or $v\in L^-$. Without loss of generality, let us suppose $v\in L^+$. By transitivity and by taking inverses, we see that $\varphi_w\in G$, for every $w\in L^+\cup L^-$. Thus, we can take the vectors
$$w_1=\left(\begin{array}{c}
\frac{1}{2}r \\
0\\
\vdots\\
0\\
\frac{1}{2}r
\end{array}
\right)\in L^+\quad\mbox{and}\quad w_2=\left(\begin{array}{c}
\frac{1}{2}r \\
0 \\
\vdots \\
0 \\
-\frac{1}{2}r
\end{array}
\right)\in L^-$$
The vector $w_1+w_2\neq0$ is spacelike, and thus we conclude that $G$ contains all translations as in the previous cases.
\end{description}

To conclude the result, we need to check the second case, that is, if $G$ contains no non-trivial translation, then $G\cong SO_0(n,1)$. The main point in this case is to notice that given $A\in SO_0(n,1)$ there is only one vector $w_A\in{\mathbb R}^{n,1}$ such that $(w_A,A)\in G$. For if $\psi_1=(v_1,A),\psi_2=(v_2,A)\in G$, then
\[
\psi_1\circ\psi_2^{-1}=(v_1,A)\circ(-A^{-1}v_2,A^{-1})=(v_1-v_2,\operatorname{id})\in G
\]
is a translation, therefore $v_1=v_2$ and thus the map
\[
SO_0(n,1)\ni A\mapsto w_A\in{\mathbb R}^{n,1}
\]
is injective. We conclude that, in this case, the surjective map $\operatorname{pr}_2\colon G\to SO_0(n,1)$ is an isomorphism and its inverse is the map $SO_0(n,1)\ni A\mapsto \psi_A\in G$.
\end{proof}

Some of the main results of the next section are based on a direct application of \cite[Theorem 7.1, Chapter IV]{KoNo} to the case of the rolling system studied in \cite{ChKok}. It is immediate to note that Theorem \ref{th:action} above is a generalization to the previously mentioned result of Kobayashi and Nomizu in the Lorentzian context, with one important difference: in the Riemannian case all subgroups $G<SE(n)$ such that $\operatorname{pr}_2(G)=SO(n)$ and $G\neq SE(n)$ have a fixed point. For a given group $G$ one such fixed point can be written down explicitly using the Haar measure and, thus, the compactness of $SO(n)$ plays a crucial role for its existence. 

We have not been able to determine whether all the groups $G\cong SO_0(n,1)$ described in Theorem \ref{th:action} have a fixed point, nevertheless we can show directly that there are plenty of such subgroups of $SE_0(n,1)$. Given $(x_0,A)\in SE_0(n,1)$, let us consider the function
\[
\xi_A(v)=Av-Ax_0+x_0.
\]
It is evident that $x_0$ is indeed fixed by $\xi_A$.

Now, fixing $x_0 \in \mathbb{R}^{n,1}$, consider the map
\begin{equation}\label{dia:action}
\begin{array}{rcl}
\Xi_{x_0} : SO_0(n,1) &\longrightarrow & SE_0(n,1) \\
A&\longmapsto& \xi_A
\end{array}
\end{equation}
The image of $\Xi_{x_0}$ is a subgroup of $SE_0(n,1)$, indeed
\begin{align*}
(\xi_B\circ\xi_A)(v)&=\xi_B(Av-Ax_0+x_0)=B(Av-Ax_0+x_0)-Bx_0+x_0\\
&=BAv-BAx_0+x_0=\xi_{BA}(v),
\end{align*}
and furthermore $\Xi_{x_0}$ is injective, indeed if $A,B\in SO_0(n,1)$ satisfy $\xi_A=\xi_B$, then evaluating these maps on an arbitrary vector $v\in\mathbb{R}^{n,1}$, we see that
\[
Av-Ax_0+x_0=Bv-Bx_0+x_0\implies
(A-B)(v-x_0)=0.
\]
Thus $v-x_0\in\operatorname{ker}(A-B)$, for all $v\in\mathbb{R}^{n,1}$ and therefore $\operatorname{ker}(A-B)=\mathbb{R}^{n,1}$, from which the injectivity of $\Xi_{x_0}$ follows. As an immediate consequence, we have that for each  $x_0 \in \mathbb{R}^{n,1}$, the group
\[
\Xi_{x_0}(SO_0(n,1))<SE_0(n,1)
\]
is isomorphic to $SO_0(n,1)$ and has $x_0$ as a fixed point (one of possibly many). In the Riemannian case all such subgroups arise as in \eqref{dia:action}.








\section{Controllability of the Lorentzian rolling problem}\label{sec:control}

Let $x_0\in M$ and $\gamma\colon I\to M$ an a.c. loop based at $x_0$.
For any initial configuration $q_0=(x_0,\hat{x};A)\in Q$, we can write down the rolling curve $q_{\mathcal{D}_R}(\gamma,q_0)$ starting at $q_0$ and following $\gamma$, that is, satisfying $\pi_{Q,M}\circ q_{\mathcal{D}_R}(\gamma,q_0)=\gamma$, as follows
\begin{equation}\label{eq:pardevel}
q_{\mathcal{D}_R}(\gamma,q_0)(t)=\left(\gamma(t),\hat{x}+A\int_0^1P_s^0(\gamma)\dot{\gamma}(s)\,ds;AP_t^0(\gamma)\right).
\end{equation}
This equality follows from \eqref{eq:rollcurve}, since on $\widehat M={\mathbb R}^{n,1}$ the development map $\widehat\Lambda$ is trivial.

\begin{Th}\label{th:control}
Let $M$ be a development complete Lorentzian $(n+1)-$dimensional manifold, and let $\widehat{M}=\mathbb{R}^{n,1}$.
\begin{enumerate}
\item[(i)] If the rolling problem is completely controllable, then the Lorentzian holonomy group $H$ of $M$ is equal to $SO_0(n,1)$.

\item[(ii)] If the Lorentzian holonomy group of $M$ is $SO_0(n,1)$ and $\mathcal{H}_q<SE_0(n,1)$ contains a nontrivial pure translation, then the rolling problem is completely controllable.


\end{enumerate}
\end{Th}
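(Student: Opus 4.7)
The plan is to reduce the question about controllability to a question about the rolling holonomy group $\mathcal{H}_q<SE_0(n,1)$ supplied by Lemma \ref{lem:distr}, to link $\mathcal{H}_q$ to the Lorentzian holonomy of $M$ via the projection $\operatorname{pr}_2\colon SE_0(n,1)\to SO_0(n,1)$, and then to invoke the dichotomy in Theorem \ref{th:action}. The bridge between the two holonomy groups is the following computation. Fix $q_0=(x_0,\hat{x};A)\in Q$ and let $\gamma\in\Omega_{x_0}(M)$; the formula \eqref{eq:pardevel} evaluated at $t=1$ gives
\[
q_{\mathcal{D}_R}(\gamma,q_0)(1)=\bigl(x_0,\ \hat{x}+A\,\hat{\gamma}(1);\ A\,P_1^0(\gamma)\bigr),
\]
while by Lemma \ref{lem:distr}(ii) this endpoint equals $\mu((w,C),q_0)=(x_0,C\hat{x}+w;CA)$ for a unique $(w,C)\in\mathcal{H}_q$. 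Matching the third coordinates yields $C=A\,P_1^0(\gamma)\,A^{-1}$. Letting $\gamma$ range over all loops at $x_0$, the element $P_1^0(\gamma)=(P_0^1(\gamma))^{-1}$ sweeps out the full Lorentzian holonomy group $H_{x_0}$, so $\operatorname{pr}_2(\mathcal{H}_q)=A\,H_{x_0}\,A^{-1}$. Since $A$ is an orientation-preserving linear isometry $T_{x_0}M\to\mathbb{R}^{n,1}$, this gives the equivalence $\operatorname{pr}_2(\mathcal{H}_q)=SO_0(n,1)\iff H_{x_0}=SO_0(n,1)$.

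Part~(i) follows quickly from the bridge. Complete controllability means $\mathcal{O}_{\mathcal{D}_R}(q_0)=Q$; in particular $\mathcal{O}_{\mathcal{D}_R}(q_0)\cap\pi_{Q,M}^{-1}(x_0)=\pi_{Q,M}^{-1}(x_0)$. By Lemma \ref{lem:distr}(ii) this intersection coincides with $\mu_{q_0}(\mathcal{H}_q)$, and since $\mu_{q_0}\colon SE_0(n,1)\to\pi_{Q,M}^{-1}(x_0)$ is a diffeomorphism, $\mathcal{H}_q=SE_0(n,1)$. Projecting, $\operatorname{pr}_2(\mathcal{H}_q)=SO_0(n,1)$, whence $H_{x_0}=SO_0(n,1)$.

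For part~(ii), the hypothesis $H_{x_0}=SO_0(n,1)$ combined with the bridge yields $\operatorname{pr}_2(\mathcal{H}_q)=SO_0(n,1)$, so Theorem \ref{th:action} applies with $G=\mathcal{H}_q$. Its dichotomy states $\mathcal{H}_q=SE_0(n,1)$ or $\mathcal{H}_q\cong SO_0(n,1)$, and inspection of the argument shows the second alternative occurs exactly when $\mathcal{H}_q$ contains no non-trivial pure translation. The extra assumption that it does contain one therefore forces $\mathcal{H}_q=SE_0(n,1)$. Consequently the orbit $\mathcal{O}_{\mathcal{D}_R}(q_0)$ contains the entire fiber over $x_0$, and its conjugacy class $H$ from Lemma \ref{lem:distr}(ii) coincides with the full structure group $SE_0(n,1)$ of the principal bundle $\pi_{Q,M}\colon Q\to M$. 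The $H$-sub-bundle $\mathcal{O}_{\mathcal{D}_R}(q_0)\to M$ must therefore equal $Q$, establishing complete controllability.

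The main obstacle is the bridge computation: beyond the careful bookkeeping of parallel transport and of the semidirect product law in $SE_0(n,1)$, it implicitly uses the development completeness of $M$, which guarantees that rolling along any a.c. loop is defined on all of $[0,1]$ and so yields a bona fide element of $\mathcal{H}_q$. A secondary but conceptually important point, already absorbed into Theorem \ref{th:action}, is that the non-compactness of $SO_0(n,1)$ precludes the Haar-measure fixed-point argument available in the Riemannian analogue of \cite{ChKok}; this is precisely why the auxiliary translation hypothesis in part~(ii) cannot be dropped with the present technology.
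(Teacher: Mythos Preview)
Your proof is correct and follows essentially the same route as the paper: both arguments use formula \eqref{eq:pardevel} to match the third coordinate of the rolling curve at $t=1$ with the $SE_0(n,1)$-action, thereby linking $\operatorname{pr}_2(\mathcal{H}_q)$ to $A\,H_{x_0}\,A^{-1}$, and then invoke Lemma~\ref{lem:distr} and Theorem~\ref{th:action} exactly as you do. The only cosmetic difference is that the paper treats (i) by a direct element-chase (choosing $A=\operatorname{id}$ and a target $B\in SO_0(T_{x_0}M)$) rather than passing through $\mathcal{H}_q=SE_0(n,1)$ first, and proves the inclusion $\operatorname{pr}_2(\mathcal{H}_q)\supseteq SO_0(n,1)$ separately inside part (ii); your unified ``bridge'' formulation is arguably cleaner.
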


\begin{proof}
   
\begin{enumerate}
\item[(i)]    First, suppose that the rolling problem is completely controllable. Take $x_0\in M$, $A=\operatorname{id}_{T_{x_0}M}$, and $q_0=(x_0,0;A)\in Q$, where it is understood that $T_0(T_{x_0}M)=T_{x_0}M$.
    
    Let $B\in SO_0(T_{x_0}M)$, and $q=(x_0,0;AB)\in Q$. Since the problem is completely controllable, there exists $\gamma\in\Omega_{x_0}(M)$ such that $q=q_{{\mathcal D}_R}(\gamma,q_0)(1)$. Thus, by equation \eqref{eq:pardevel}, the equality
    $$(x_0,0;AB)=\left(x_0,A\int_{0}^{1}P_s^0(\gamma)\dot{\gamma}(s)ds;AP_1^0(\gamma)\right)$$
    holds, implying that $B=P_1^0(\gamma)\in H$. This proves the necessary condition.

\item[(ii)]
    Now, assume that $H=SO_0(n,1)$. Let $q=(x_0,0;A)\in Q$, and let $\mathcal{H}_q$ be the subgroup of $SE_0(n,1)$ such that $\mu(\mathcal{H}_q\times q)=\pi^{-1}_{Q,M}(x_0)\cap\mathcal{O}_{\mathcal{D}}(q)$, as in Lemma \ref{lem:distr} (ii).
    
    Let us show that $\operatorname{pr}_2(\mathcal{H}_q)=SO_0(n,1)$. Indeed, if $B\in SO_0(n,1)$ and since $A\in \operatorname{Iso}(T_{x_0}M,T_0\widehat{M})$, then we have $A^{-1}BA\in SO_0(T_{x_0}M)$. Therefore, by hypothesis, there exists a loop $\gamma\in\Omega_{x_0}(M)$ such that $A^{-1}BA=P_1^0(\gamma)$. Let $(\hat{y},C)\in\mathcal{H}_q$ be such that $\mu((\hat{y},C)q)=q_{\mathcal{D}}(\gamma,q)(1)$, which from equation \eqref{eq:action} is given by
$$
\mu((\hat{y};C)q)=(x_0,\hat{y};CA)=\left(x_0,A\int_0^1P_s^0(\gamma)\dot{\gamma}(s)\,ds;AP_1^0(\gamma)\right)
=q_{\mathcal{D}}(\gamma,q)(1)$$
then we have
$$
CA=AP_1^0(\gamma)
$$
and therefore $C=AP_1^0(\gamma)A^{-1}=B$, concluding the claim.

 Let $H=H_x$ be the Lorentzian holonomy group of $M$ at $x$. It follows from Lemma 4.2 that if $\mathcal{H}_q$ contains a nontrivial pure translation for some $q=(x_0,0;A)$, then $\mathcal{H}_q=SE_0(n,1)$. In this case, by Lemma 4.1, we have
    $$\pi^{-1}_{Q,M}(x_0)\cap\mathcal{O}_{\mathcal{D}}(q)=\mu(\mathcal{H}_{q}\times q)=\mu(SE_0(n,1)\times q)=\pi^{-1}_{Q,M}(x_0)$$
    and therefore $\mathcal{O}_{\mathcal{D}}(q)=Q$, since $\pi_{\mathcal{O}_{\mathcal{D}}(q),M}$ is a submanifold of $\pi_{Q,M}$. Thus, the rolling problem is completely controllable.\qedhere
\end{enumerate}
\end{proof}

It is relevant to contrast the previous result with \cite[Theorem 4.3]{ChKok}. Besides the fact that the lack of compactness of $SO_0(n,1)$ implies that we cannot assert the existence of fixed points for subgroups of $SE_0(n,1)$ as mentioned after the proof of Theorem \ref{th:action}, there is another important technical difference which is not available in the Lorentzian context: in the Riemannian case it can be shown that the exponential is a local isometry, thus turning the analogous result into an equivalence. Many computations in the proof of \cite[Theorem 4.3]{ChKok} can be repeated in the Lorentzian context assuming the action of $\mathcal{H}_q$ has a fixed point, but the same conclusion cannot be reached (at least not by these methods).

Note that the first part of the previous proof allows for an immediate generalization, with exactly the same proof.

\begin{Pro}\label{prop:anysign}
Let $M$ be a development complete pseudo-Riemannian $(n+\nu)-$dimensional manifold of index $\nu$, and let $\widehat{M}=\mathbb{R}^{n,\nu}$. If the rolling problem is completely controllable, then the pseudo-Riemannian holonomy group $H$ of $M$ is equal to $SO_0(n,\nu)$.
\end{Pro}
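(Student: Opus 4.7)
The plan is to reproduce verbatim the proof of the necessity direction of Theorem \ref{th:control}(i), since the only place where the Lorentzian hypothesis $\nu=1$ entered that proof was through the flatness of $\widehat{M}$, and flat $\mathbb{R}^{n,\nu}$ has trivial development in every signature. I would fix a base point $x_0\in M$, identify $T_{x_0}M$ isometrically with $\mathbb{R}^{n,\nu}$, and take $A=\operatorname{id}_{T_{x_0}M}$, giving the initial configuration $q_0=(x_0,0;A)\in Q$. For an arbitrary $B\in SO_0(T_{x_0}M)$, I would then set the target configuration $q=(x_0,0;AB)\in\pi_{Q,M}^{-1}(x_0)$, and use complete controllability of $\mathcal{D}_R$ to produce an a.c. loop $\gamma\in\Omega_{x_0}(M)$ with $q_{\mathcal{D}_R}(\gamma,q_0)(1)=q$.

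The heart of the argument is then to invoke formula \eqref{eq:pardevel}, which remains valid in this pseudo-Euclidean setting because its derivation uses only the triviality of the development map $\widehat{\Lambda}$ on $\widehat{M}$. Matching the third coordinates of the two sides of $q_{\mathcal{D}_R}(\gamma,q_0)(1)=q$ yields $AB=AP_1^0(\gamma)$, and cancelling the invertible $A$ gives $B=P_1^0(\gamma)\in H$. Since $B$ was arbitrary in $SO_0(n,\nu)$, this produces the inclusion $SO_0(n,\nu)\subseteq H$. The reverse inclusion is immediate from the construction of $Q$, since $H$ consists of parallel transport operators along loops, each of which is an orientation-preserving linear isometry, and the restricted (connected) component of $H$ automatically lies in the identity component $SO_0(n,\nu)$.

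There is no real obstacle here; the only point warranting an explicit remark is the justification that \eqref{eq:pardevel} carries over without change to arbitrary index $\nu$, which is a one-line observation about the flatness of pseudo-Euclidean space. In particular, unlike the converse direction in Theorem \ref{th:control}(ii), no analogue of Theorem \ref{th:action} is needed, so none of the signature-dependent difficulties that occupy Section \ref{sec:action} appear in this generalization, and the proof is genuinely a direct transposition of the $\nu=1$ argument.
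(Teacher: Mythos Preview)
Your proposal is correct and is exactly the approach taken in the paper: the paper states explicitly that ``the first part of the previous proof allows for an immediate generalization, with exactly the same proof,'' and gives no separate argument for Proposition~\ref{prop:anysign}. Your observation that formula~\eqref{eq:pardevel} depends only on the flatness of $\widehat{M}=\mathbb{R}^{n,\nu}$, and not on the signature, is precisely the point.
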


\begin{Ob}
It is not immediate to generalize the second part of Theorem \ref{th:control} to arbitrary index, since its proof depends on Theorem \ref{th:action} which uses the fact that the space is Lorentzian.
\end{Ob}

\section{Examples}\label{sec:ex}

It is known that full Lorentzian holonomy is the generic situation in Lorentzian geometry, see \cite{GuMu}. The most extensive and careful study of Lorentzian holonomy is the seminal paper by Leistner \cite{Leistner}. As a consequence, plenty of completely controllable examples can be constructed making sure that the group ${\mathcal H}_q<SE_0(n,1)$ contains a nontrivial pure translation. The following examples are more interesting.

The careful construction of rolling maps presented in \cite{MSL}, extending the case of the Lorentzian spheres shown in \cite{AnnFa}, implies that the $(n+\nu)-$dimensional hyperquadrics of index $\nu$
\[
H^{n,\nu}(r)=\{p\in{\mathbb R}^{n,\nu+1}\colon\langle p,p\rangle_J=-r^2\}
\]
and
\[
S^{n,\nu}(r)=\{p\in{\mathbb R}^{n+1,\nu}\colon\langle p,p\rangle_J=r^2\}
\]
are development complete, for any $r>0$. Since the rolling systems of both $H^{n,\nu}(r)$ and $S^{n,\nu}(r)$ on their respective tangent spaces are shown to be controllable by \cite[Theorem 5.3]{MSL}, as a consequence of Proposition \ref{prop:anysign} it is possible to conclude that:
\begin{Cor}
For any $r>0$, the pseudo-Riemannian holonomy of the family of hyperquadrics $H^{n,\nu}(r)$ and $S^{n,\nu}(r)$ is $SO_0(n,\nu)$.
\end{Cor}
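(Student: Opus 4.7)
The plan is to obtain the corollary as an immediate application of Proposition \ref{prop:anysign} to each of the two families of hyperquadrics. To invoke that proposition for a pseudo-Riemannian $(n+\nu)$-dimensional manifold $M$ of index $\nu$ rolling on $\mathbb{R}^{n,\nu}$, two hypotheses must be in place: first, that $M$ is development complete, and second, that the rolling system of $M$ on $\mathbb{R}^{n,\nu}$ is completely controllable. The conclusion of the proposition then reads that the pseudo-Riemannian holonomy group $H$ of $M$ coincides with $SO_0(n,\nu)$, which is exactly the statement required by the corollary.

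First I would verify the index condition: the hyperquadric $H^{n,\nu}(r)\subset\mathbb{R}^{n,\nu+1}$ inherits from the ambient metric a pseudo-Riemannian metric of index $\nu$ (one timelike direction from $\mathbb{R}^{n,\nu+1}$ is consumed by the radial direction normal to $H^{n,\nu}(r)$), while $S^{n,\nu}(r)\subset\mathbb{R}^{n+1,\nu}$ inherits a pseudo-Riemannian metric of index $\nu$ as well (the normal direction is spacelike). Thus both hyperquadrics are legitimate candidates for Proposition \ref{prop:anysign}, with tangent spaces isometric to $\mathbb{R}^{n,\nu}$.

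Next I would invoke the cited results from \cite{MSL}. The construction in that reference explicitly builds rolling maps of $H^{n,\nu}(r)$ and $S^{n,\nu}(r)$ along arbitrary admissible curves defined on all of $[0,1]$; this is precisely the statement that the $\widehat M$-component of the rolling curve exists globally, i.e.\ that the development map is surjective and hence (combined with the standard injectivity argument) bijective. Therefore both hyperquadrics are development complete. Then \cite[Theorem 5.3]{MSL} asserts complete controllability of these rolling systems on the corresponding flat pseudo-Euclidean spaces, so the second hypothesis of Proposition \ref{prop:anysign} holds as well.

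With both hypotheses in hand, a single application of Proposition \ref{prop:anysign} to each hyperquadric gives $H=SO_0(n,\nu)$, completing the argument. The main step that required care, rather than being a genuine obstacle, is checking that the constructions of \cite{MSL} really provide development completeness in the precise sense used here (bijectivity of $\Lambda_x$) and not merely geodesic completeness; once this compatibility of terminology is settled, the corollary follows without further computation.
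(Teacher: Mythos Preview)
Your proposal is correct and follows essentially the same route as the paper: the paper also deduces the corollary directly from Proposition~\ref{prop:anysign} by citing \cite{MSL} both for development completeness of $H^{n,\nu}(r)$ and $S^{n,\nu}(r)$ and for complete controllability of their rolling systems on the corresponding tangent spaces (via \cite[Theorem~5.3]{MSL}). Your added verification of the index of the induced metrics and the remark about checking that \cite{MSL} yields development completeness in the precise sense required are reasonable elaborations, but they do not change the argument.
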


\section{Acknowledgments} 
The results of this paper are part of the Ph.D. thesis of the first author at Universidad de La Frontera, Temuco, Chile.

\end{document}